\documentclass[english]{article}
\usepackage[T1]{fontenc}
\usepackage[latin9]{inputenc}
\usepackage{babel}

\usepackage{url}
\usepackage{amsthm}
\usepackage{amsmath}
\usepackage{graphicx}
\usepackage{amssymb}
\usepackage[all]{xy}
\usepackage[unicode=true, pdfusetitle,
 bookmarks=true,bookmarksnumbered=false,bookmarksopen=false,
 breaklinks=false,pdfborder={0 0 1},backref=false,colorlinks=false]
 {hyperref}

\makeatletter
  \theoremstyle{plain}
  \newtheorem*{thm*}{Theorem}
  \theoremstyle{remark}
  \newtheorem*{acknowledgement*}{Acknowledgement}
\theoremstyle{plain}
\newtheorem{thm}{Theorem}[section]
  \theoremstyle{plain}
  \newtheorem{prop}[thm]{Proposition}
  \theoremstyle{plain}
  \newtheorem{lem}[thm]{Lemma}
  \theoremstyle{plain}
  \newtheorem{cor}[thm]{Corollary}
  \theoremstyle{remark}
  \newtheorem{rem}[thm]{Remark}

\date{}

\makeatother

\begin{document}

\title{The contact homology of Legendrian knots with maximal Thurston-Bennequin
invariant}

\author{Steven Sivek}
\maketitle
\begin{abstract}
We show that there exists a Legendrian knot with maximal Thurston-Bennequin
invariant whose contact homology is trivial. We also provide another
Legendrian knot which has the same knot type and classical invariants
but nonvanishing contact homology.
\end{abstract}

\section{Introduction}

The Chekanov-Eliashberg invariant \cite{Chekanov,Eliashberg}, which
assigns to each Legendrian knot $K$ a differential graded algebra
$(Ch(K),\partial)$ over $\mathbb{F}=\mathbb{Z}/2\mathbb{Z}$, has
been a powerful tool for classifying Legendrian knots in the standard
contact $S^{3}$. The closely related characteristic algebra $\mathcal{C}(K)$
was defined by Ng \cite{Ng - computable} to be the quotient of $Ch(K)$
by the two-sided ideal $\langle\mathrm{Im}(\partial)\rangle$; if
two knots $K$ and $K'$ are Legendrian isotopic, then we can add
some free generators to $\mathcal{C}(K)$ and $\mathcal{C}(K')$ to
make them tamely isomorphic. Both of these invariants only provide
information about nondestabilizable knots: if $K$ is a stabilized
knot, then both the Legendrian contact homology $H_{*}(Ch(K))$ and
the characteristic algebra $\mathcal{C}(K)$ vanish. For an introduction
to Legendrian knots, see \cite{Etnyre-intro}.

Shonkwiler and Vela-Vick \cite{SV} gave the first examples of Legendrian
knots with nonvanishing contact homology which do not have maximal
Thurston-Bennequin invariant, representing the knot types $m(10_{161})$
and $m(10_{145})$. Conversely, there are conjecturally nondestabilizable
knots of type $m(10_{139})$, $10_{161}$, and $m(12n_{242})$ with
non-maximal $tb$ and vanishing contact homology \cite{atlas,SV}.
On the other hand, it is an open question whether there is a Legendrian
knot $K$ for which $tb(K)$ is maximal but the contact homology of
$K$ vanishes. We will answer this question and show that it is not
determined solely by the classical invariants $tb$ and $r$ of $K$:
\begin{thm*}
There are distinct $tb$-maximizing Legendrian representatives $K_{1}$
and $K_{2}$ of $m(10_{132})$ with the same classical invariants
such that $K_{1}$ has trivial contact homology, even with $\mathbb{Z}[t,t^{-1}]$
coefficients, while $K_{2}$ does not.
\end{thm*}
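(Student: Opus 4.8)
The plan is to write down two explicit Legendrian front diagrams, one for $K_1$ and one for $K_2$, each representing the knot type $m(10_{132})$ with Thurston--Bennequin invariant equal to the maximal value $\overline{tb}(m(10_{132}))$ and with the same rotation number, and then to separate the two knots by their Chekanov--Eliashberg DGAs: I will show that $H_*(Ch(K_1)) = 0$ while $H_*(Ch(K_2)) \neq 0$. It is worth noting at the outset that once these two homology computations are in hand, the distinctness of $K_1$ and $K_2$ is automatic. A stable tame isomorphism only adds acyclic pairs of generators, so the isomorphism type of $H_*(Ch(K))$ --- over $\mathbb{F}$ and over $\mathbb{Z}[t,t^{-1}]$ alike --- is a Legendrian isotopy invariant; hence if $K_1$ and $K_2$ were Legendrian isotopic this invariant would have to agree for them, which it does not. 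So the whole content of the theorem lies in the two DGA computations.

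First I would fix the fronts. From each front one recovers the underlying smooth knot (for instance by computing its Gauss code) and confirms, by hand or by machine, that it is $m(10_{132})$; one computes $tb = w - c$, where $w$ is the writhe and $c$ the number of right cusps, together with the rotation number $r = \tfrac{1}{2}(D - U)$ in terms of the numbers $D$, $U$ of downward and upward cusps. The diagrams should be chosen so that both give $tb = 1$ and $r = 0$. That this value of $tb$ is maximal for $m(10_{132})$ follows from the Kauffman polynomial bound on $\overline{tb}$ and may in any case be quoted from the Legendrian knot atlas \cite{atlas}. With the fronts fixed I would enumerate the DGA generators --- one for each crossing and one for each right cusp --- compute their Conley--Zehnder degrees combinatorially, and write down the differential $\partial$ with $\mathbb{Z}[t,t^{-1}]$ coefficients by counting the disks that contribute to it; in practice this last step is done with computer assistance.

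For $K_1$ the aim is to exhibit a unit of $\mathbb{Z}[t,t^{-1}]$ in the image of $\partial$. I expect that, after a tame change of coordinates simplifying the differential, some generator $a$ of degree $1$ satisfies $\partial a = \pm t^{k}$ for an integer $k$; since $\pm t^{k}$ is a unit, the standard argument --- every cycle $x$ equals $\partial(\pm t^{-k} x a)$ --- gives $H_*(Ch(K_1)) = 0$ over $\mathbb{Z}[t,t^{-1}]$, and reducing by $t \mapsto 1$ and mod $2$ gives the same over $\mathbb{F}$. In particular the characteristic algebra $\mathcal{C}(K_1)$ is trivial and $K_1$ has no augmentations. (Should $\partial a$ fail to collapse all the way to a monomial, the same conclusion follows by checking that the remaining word is again in the image of $\partial$.) For $K_2$ the aim is the opposite: I would produce an augmentation $\varepsilon \colon (Ch(K_2), \partial) \to (\mathbb{F}, 0)$, a unital algebra map that vanishes on generators of nonzero degree and satisfies $\varepsilon \circ \partial = 0$; this amounts to solving a finite system of equations over $\mathbb{F}$ for the values $\varepsilon(a)$ with $a$ of degree $0$. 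Such an $\varepsilon$ is a chain map to $(\mathbb{F}, 0)$ with $\varepsilon(1) = 1$, so it descends to a surjection $H_*(Ch(K_2)) \twoheadrightarrow \mathbb{F}$; hence $H_*(Ch(K_2)) \neq 0$, and composing $\varepsilon$ with the reduction $\mathbb{Z}[t,t^{-1}] \to \mathbb{F}$, $t \mapsto 1$, shows the contact homology of $K_2$ is nonzero over $\mathbb{Z}[t,t^{-1}]$ as well.

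The principal obstacle is the sheer bookkeeping: the DGAs of ten-crossing Legendrian knots have many generators and the disk counts defining $\partial$ are delicate, so both computations are long and must be carried out and checked with care. Within that, the genuinely subtle part is the case of $K_1$ --- one has to find a front and a change of coordinates that make a unit visibly a boundary, and this must already work over $\mathbb{Z}[t,t^{-1}]$, where there is less room to maneuver than over $\mathbb{F}$. Once both DGA computations are complete, checking the knot type and the equality of the classical invariants is routine, and the distinctness of $K_1$ and $K_2$ follows for free, as noted above.
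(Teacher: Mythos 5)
Your treatment of $K_1$ matches the paper's: one exhibits an explicit element of $Ch(K_1)$ whose differential is $1$ over $\mathbb{Z}[t,t^{-1}]$ (the paper does this by hand with a chain of auxiliary elements built from the generators), and triviality of the contact homology follows. The gap is in your strategy for $K_2$. You propose to prove $H_*(Ch(K_2))\neq 0$ by solving for an augmentation $\varepsilon\colon Ch(K_2)\to\mathbb{F}$, but no such $\varepsilon$ exists: an ungraded augmentation exists if and only if the Kauffman polynomial bound on $tb$ is sharp for the knot type (via normal rulings, Rutherford/Fuchs/Sabloff), and $m(10_{132})$ is precisely one of the four ten-crossing knots for which that bound is \emph{not} sharp. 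So the finite system of equations over $\mathbb{F}$ you intend to solve has no solutions, and the paper goes further: it shows the characteristic algebra $\mathcal{C}(K_2)$ admits no finite-dimensional representation at all (Lemma \ref{lem:no-rep-criterion} applied to the relation $(1+x_5(x_2+x_3))x_{20}=1$). The paper's actual argument for nonvanishing is to pass to a quotient $\mathcal{C}_2/\mathcal{I}$ presented by a few generators and relations and then construct an explicit \emph{infinite-dimensional} representation of that quotient on a countable-dimensional vector space; some replacement of this kind is unavoidable, and your proposal contains no substitute for it.

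A second, related slip: you assert that maximality of $tb$ for your diagrams "follows from the Kauffman polynomial bound." Since that bound is not sharp for $m(10_{132})$, it cannot certify $\overline{tb}=-1$ (note also the classical invariants are $tb=-1$, $r=0$, not $tb=1$); the paper instead quotes Ng's result bounding $\overline{tb}$ via an appropriate cable of $m(10_{132})$ \cite{Ng - max tb}. Your observation that distinctness of $K_1$ and $K_2$ is automatic once the two homology computations are done is fine and agrees with the paper's logic.
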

These Legendrian knots, found in Chongchitmate and Ng's atlas of Legendrian
knots \cite{atlas}, can be specified as plat diagrams by the following
braid words: \begin{eqnarray*}
K_{1}: &  & 6,7,4,3,7,5,3,6,4,2,5,1,3,2,5,2,4,6,2\\
K_{2}: &  & 4,5,3,5,3,2,4,1,3,2,4,2,5,1,3,2,4,4,3,5,4,2\end{eqnarray*}
Indeed, both knots have classical invariants $tb=-1$ and $r=0$,
and Ng \cite{Ng - max tb} showed that $\overline{tb}(m(10_{132}))=-1$
by bounding $\overline{tb}$ for an appropriate cable of $m(10_{132})$.
We will prove this theorem in Section \ref{sec:The-m10_132-examples}.

Finally, the proof that $K_{2}$ has nonvanishing contact homology
uses an action of $\mathcal{C}(K_{2})$ on an infinite-dimensional
vector space, just as the nonvanishing examples in \cite{SV} did.
In Section \ref{sec:fd-reps} we will show that this is necessary
in the sense that $\mathcal{C}(K_{2})$ does not have any finite-dimensional
representations. It is completely understood when a characteristic
algebra $\mathcal{C}$ does not have any $1$-dimensional representations,
but we will ask if such a $\mathcal{C}$ can admit maps $\mathcal{C}\to\mathrm{Mat}_{n}(\mathbb{F})$
for some finite $n\geq2$. We will show that this is possible in general
by constructing $2$-dimensional representations for specific Legendrian
representatives of negative torus knots.
\begin{acknowledgement*}
I would like to thank Ana Caraiani, Tom Mrowka, Lenny Ng, Clayton
Shonkwiler, and David Shea Vela-Vick for helpful comments. This work
was supported by an NSF Graduate Research Fellowship.
\end{acknowledgement*}

\section{The $m(10_{132})$ examples\label{sec:The-m10_132-examples}}

\subsection{The vanishing example\label{sub:10_132-1}}

Let $K_{1}$ be the Legendrian representative of $m(10_{132})$ in
Figure \ref{fig:10_132-1}. Its Chekanov-Eliashberg algebra is generated
freely over $\mathbb{Z}[t,t^{-1}]$ by elements $x_{1},\dots,x_{23}$
with differentials specified in Appendix \ref{sec:differential-10_132-1}.

\noindent \begin{center}
\begin{figure}[h]
\begin{centering}
\includegraphics[scale=0.75]{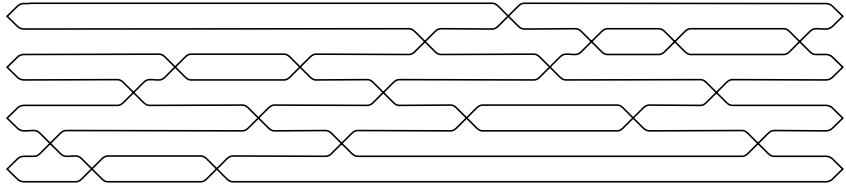}
\par\end{centering}

\caption{\label{fig:10_132-1}The representative $K_{1}$ of $m(10_{132})$,
defined by the braid word $6,7,4,3,7,5,3,6,4,2,5,1,3,2,5,2,4,6,2$.}

\end{figure}

\par\end{center}

To show that $K_{1}$ has vanishing contact homology, we need to find
a relation $\partial x=1$ in $Ch(K_{1})$. Recall that $Ch(K_{1})$
uses a signed Leibniz rule $\partial(vw)=(\partial v)w+(-1)^{|v|}v(\partial w)$,
where $|v|$ is the grading of the homogeneous element $v$, and note
that the generators with odd grading are \[
x_{2},x_{3},x_{5},x_{9},x_{11},x_{12},x_{13},x_{15},x_{20},x_{21},x_{22},x_{23}.\]
Let $a=x_{12}(x_{4}(1+x_{2}x_{5})-x_{8})+x_{14}x_{5}$ and $b=\partial a$;
then $b=x_{10}x_{4}(1+x_{2}x_{5})-x_{10}x_{8}+x_{13}x_{5}$ and $\partial b=0$.
Now \begin{eqnarray*}
\partial(x_{22}+x_{12}-ax_{18}) & = & 1+x_{17}x_{7}+bx_{18}-(bx_{18}+ax_{15}x_{7})\\
 & = & 1+(x_{17}-ax_{15})x_{7};\end{eqnarray*}
let $c=x_{22}+x_{12}-ax_{18}$. Since \[
\partial(x_{17}-ax_{15})=\partial(x_{6}-x_{4}x_{1})=\partial(1+x_{16}x_{19})=0,\]
and $\partial x_{20}=1+(x_{6}-x_{4}x_{1})(1+x_{16}x_{19})$, we can
compute \[
\partial\left(x_{20}-c(x_{6}-x_{4}x_{1})(1+x_{16}x_{19})\right)=1-(x_{17}-ax_{15})x_{7}(x_{6}-x_{4}x_{1})(1+x_{16}x_{19}).\]
Finally, we have $-x_{7}(x_{6}-x_{4}x_{1})=\partial(x_{9}+x_{2})$,
so we conclude that \[
\partial\left(x_{20}-\left(c(x_{6}-x_{4}x_{1})+(x_{17}-ax_{15})(x_{9}+x_{2})\right)(1+x_{16}x_{19})\right)=1,\]
and so $K_{1}$ has trivial contact homology over $\mathbb{Z}[t,t^{-1}]$
as desired.

\subsection{The nonvanishing example\label{sub:10_132-2}}

Let $K_{2}$ be the Legendrian representative of $m(10_{132})$ in
Figure \ref{fig:10_132-2}. The algebra $Ch(K_{2})$ is generated
freely over $\mathbb{F}=\mathbb{Z}/2\mathbb{Z}$ by $x_{1},\dots,x_{25}$
with differentials specified in Appendix \ref{sec:differential-10_132-2}.
In order to show that $K_{2}$ has nontrivial contact homology, it
will suffice to show that the characteristic algebra $\mathcal{C}_{2}=\mathcal{C}(K_{2})$
is nonvanishing \cite{SV}.

\noindent %
\begin{figure}[h]
\begin{centering}
\includegraphics[scale=0.7]{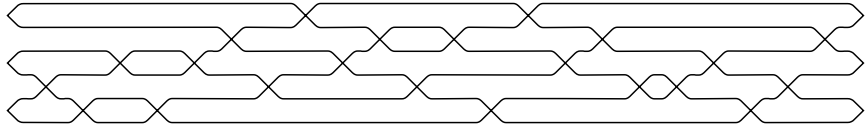}
\par\end{centering}

\caption{\label{fig:10_132-2}The representative $K_{2}$ of $m(10_{132})$,
defined by the braid word $4,5,3,5,3,2,4,1,3,2,4,2,5,1,3,2,4,4,3,5,4,2$.}

\end{figure}

The differential in $\mathcal{C}_{2}$ immediately gives us $x_{1}=x_{6}=0$,
and \[
x_{12}=\partial(x_{12}x_{23}+x_{15}x_{22}+x_{17}x_{18})\]
gives $x_{12}=0$, hence $\partial x_{24}=0$ becomes $(1+x_{5}(x_{2}+x_{3}))x_{20}=1$.
Then we can use $(\partial x_{13})x_{20}=0$ and $(\partial x_{17})x_{20}=0$
to get $x_{11}=0$ and $x_{15}=0$, so \[
x_{1}=x_{6}=x_{11}=x_{12}=x_{15}=0.\]
Furthermore, $\partial x_{21}=0$ becomes $x_{14}=cx_{20}$, so $\partial x_{25}=0$
gives us $x_{14}=x_{20}$.

Consider the quotient of $\mathcal{C}_{2}$ by the two-sided ideal
\[
\mathcal{I}=\langle x_{3},x_{7},x_{8},x_{9},x_{10},x_{13}+1+x_{2}x_{5},x_{17},x_{19},x_{21},\dots,x_{25}\rangle.\]
The quotient $\mathcal{C}_{2}/\mathcal{I}$ is generated by $x_{2},x_{4},x_{5},x_{14},x_{16},x_{18}$,
and its nontrivial relations are $c=x_{2}+x_{14}(1+x_{2}x_{5})+x_{16}(1+x_{5}x_{2})=1$
and \begin{eqnarray*}
x_{4} & = & x_{5}(1+x_{2}x_{4})\\
x_{18} & = & 1+x_{2}x_{4}\\
0 & = & (1+x_{5}x_{2})x_{18}\\
1 & = & (1+x_{2}x_{5})x_{18}\\
1 & = & (1+x_{5}x_{2})x_{14}.\end{eqnarray*}
Note that the pair of relations $x_{4}=x_{5}(1+x_{2}x_{4})$ and $x_{18}=1+x_{2}x_{4}$
are equivalent to $x_{4}=x_{5}x_{18}$ and $(1+x_{2}x_{5})x_{18}=1$,
the latter of which is already known, so we can replace the pair with
$x_{4}=x_{5}x_{18}$. Furthermore, multiplying the $c=1$ equation
on the right by $x_{18}$ gives $x_{14}=(1+x_{2})x_{18}$, hence the
last relation becomes $(1+x_{5}x_{2})x_{2}x_{18}=1$. Then the $c=1$
equation becomes \[
x_{16}(1+x_{5}x_{2})=(1+x_{2})(1+x_{18}(1+x_{2}x_{5}))\]
so we multiply on the right by $x_{2}x_{18}$ and get \[
x_{16}=(1+x_{2})(x_{2}x_{18}+x_{18}x_{2}(1+x_{5}x_{2})x_{18})=(1+x_{2})x_{2}x_{18}.\]
Thus we see that $x_{4}$, $x_{14}$, and $x_{16}$ can be expressed
in terms of $x_{2}$, $x_{5}$, and $x_{18}$, and $c=1$ can be rewritten
as \[
0=(1+x_{2})\left(1+x_{18}(1+x_{2}x_{5})+x_{2}x_{18}(1+x_{5}x_{2})\right).\]
Relabeling $x_{2},x_{5},x_{18}$ as $a,b,c$ respectively, we have
a homomorphism from $\mathcal{C}_{2}/\mathcal{I}$ to the quotient
$R$ of the free algebra $\mathbb{F}\langle a,b,c\rangle$ by the
two-sided ideal generated by the relations \begin{eqnarray*}
0 & = & 1+c(1+ab)+ac(1+ba)\\
0 & = & (1+ba)c\\
1 & = & (1+ab)c\\
1 & = & (1+ba)ac.\end{eqnarray*}

\begin{prop}
The algebra $R$ is nontrivial.\end{prop}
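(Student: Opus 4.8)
The plan is to prove that $R\neq 0$ by exhibiting a single nonzero $R$-module: an $\mathbb{F}$-vector space $V\neq 0$ together with operators $A,B,C\in\mathrm{End}_{\mathbb{F}}(V)$ satisfying the four defining relations, so that the unit of $R$ acts as $\mathrm{id}_V\neq 0$. This is the strategy used for the nonvanishing examples in \cite{SV}, and here the module is necessarily infinite-dimensional: in the abelianization of $R$ we have $ab=ba=:t$, so the second and third relations become $(1+t)c=0$ and $(1+t)c=1$, whence $R^{\mathrm{ab}}=0$, so $R$ has no $1$-dimensional representations (and, by Section \ref{sec:fd-reps}, no finite-dimensional ones at all). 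So I would look for an explicit action on a space with countable basis and verify the relations by hand.

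To locate a workable module I would first observe that, for a module structure given by operators $A,B,C$, writing $M=1+AB$ and $L=1+BA$ turns the four defining relations into $MC=\mathrm{id}$, $LC=0$, $LAC=\mathrm{id}$, and $CM+ACL=\mathrm{id}$ (using $-1=1$ over $\mathbb{F}$). Combined with the universal identity $MA=AL$ (both sides equal $A+ABA$), these force $CM$ and $ACL$ to be orthogonal idempotents summing to $\mathrm{id}_V$, force $C$ and $AC$ to be isomorphisms of $V$ onto the complementary summands $\mathrm{Im}(C)$ and $\mathrm{Im}(AC)$, and force $M$ and $L$ to invert them. This essentially dictates the shape of the example: take $V$ with basis $e_0,e_1,e_2,\dots$, declare $\mathrm{Im}(C)$ and $\mathrm{Im}(AC)$ to be the spans of the even- and odd-indexed basis vectors, set $Ce_n=e_{2n}$, and choose $A$ so that $ACe_n=e_{2n+1}$. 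The simplest choice, $Ae_n=e_{n+1}$, then pins $M$ down to the ``halving'' map $Me_{2n}=e_n$, $Me_{2n+1}=0$ (and likewise $Le_{2n+1}=e_n$, $Le_{2n}=0$).

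Finally, $B$ is then forced: from $1+AB=M$ and the injectivity of $A$ one gets $Be_0=0$, $Be_{2n+1}=e_{2n}$, and $Be_{2n}=e_{2n-1}+e_{n-1}$ for $n\geq 1$, and a short computation then gives $ABe_{2n}=e_{2n}+e_n$, $ABe_{2n+1}=e_{2n+1}$, $ABe_0=0$, $BAe_{2n}=e_{2n}$, $BAe_{2n+1}=e_{2n+1}+e_n$, from which $MC=\mathrm{id}$, $LC=0$, $LAC=\mathrm{id}$, and $CM+ACL=\mathrm{id}$ — equivalently the four defining relations of $R$ — are verified directly on the basis. Hence $A,B,C$ define a nonzero representation of $R$, so $R\neq 0$. (Equivalently one may take $V=\mathbb{F}[t]$ with $A$ multiplication by $t$ and $C$ the Frobenius map $f\mapsto f^2$.) I expect the main obstacle to be finding the module: once the idempotent analysis of the previous paragraph is available, everything but the choice of $A$ is determined and the remaining verification is routine, whereas without that reduction — and bearing in mind that the commutative quotient of $R$ is already trivial — the four relations are not easy to satisfy by inspection.
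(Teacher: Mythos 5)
Your proof is correct and takes essentially the same route as the paper: an explicit infinite-dimensional representation of $R$ on a space with countable basis, split into two complementary copies of itself via even/odd indices, so that $c$ and $ac$ become isomorphisms onto the two summands inverted by $1+ab$ and $1+ba$. The differences (left module versus the paper's right action, deriving the operators from the orthogonal-idempotent analysis rather than writing them down directly, the $\mathbb{F}[t]$/Frobenius repackaging) are presentational, and your formulas for $A$, $B$, $C$ do satisfy all four defining relations.
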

\begin{proof}
We will construct an infinite-dimensional representation of $R$,
following ideas from \cite{SV}. Let $\mathcal{H}$ be a countable-dimensional
$\mathbb{F}$-vector space, with basis $\{v_{0},v_{1},v_{2},\dots\}$,
and write $\mathcal{H}=\mathcal{H}_{1}\oplus\mathcal{H}_{2}$ where
each $\mathcal{H}_{i}$ summand is isomorphic to $\mathcal{H}$. Let
$f,g:\mathcal{H}\to\mathcal{H}$ be homomorphisms defined by $f(v_{i})=v_{2i}$
and $g(v_{i})=v_{2i+1}$, so that the diagrams \begin{eqnarray*}
\xymatrix{\mathcal{H}_{1}\ar@{}[d]|{\bigoplus}\ar[r]^{f} & \mathcal{H}_{1}\ar@{}[d]|{\bigoplus}\\
\mathcal{H}_{2}\ar[ru]^{g} & \mathcal{H}_{2}}
 &  & \xymatrix{\mathcal{H}_{1}\ar@{}[d]|{\bigoplus}\ar[rd]^{f} & \mathcal{H}_{1}\ar@{}[d]|{\bigoplus}\\
\mathcal{H}_{2}\ar[r]^{g} & \mathcal{H}_{2}}
\end{eqnarray*}
represent isomorphisms $\mathcal{H}\stackrel{\sim}{\to}\mathcal{H}_{1}$
and $\mathcal{H}\stackrel{\sim}{\to}\mathcal{H}_{2}$, respectively.
We also define homomorphisms $p,s:\mathcal{H}\to\mathcal{H}$ by $p(v_{i})=v_{i-1}$
for $i\geq1$, $p(v_{0})=0$ and $s(v_{i})=v_{i+1}+v_{2(i+1)}$. It
is straightforward to check the identities \begin{eqnarray*}
s\circ p=f+1, & p\circ g=f, & p\circ s=g+1.\end{eqnarray*}

We define a right action of $a$ and $b$ on $\mathcal{H}\cong\mathcal{H}_{1}\oplus\mathcal{H}_{2}$
by the diagrams \begin{eqnarray*}
\xymatrix{\mathcal{H}_{1}\ar@{}[d]|{\bigoplus}\ar[rd]|(0.4){p} & \mathcal{H}_{1}\ar@{}[d]|{\bigoplus}\\
\mathcal{H}_{2}\ar[ru]|(0.4){1} & \mathcal{H}_{2}}
 & \xymatrix{\ \ar@{}[d]|{\displaystyle \textrm{and}}\\
\ }
 & \xymatrix{\mathcal{H}_{1}\ar@{}[d]|{\bigoplus}\ar[r]^{g}\ar[rd]|(0.4){1} & \mathcal{H}_{1}\ar@{}[d]|{\bigoplus}\\
\mathcal{H}_{2}\ar[ru]|(0.4){s} & \mathcal{H}_{2}}
\end{eqnarray*}
respectively. Then we can compute the action of $ab$ and $ba$ by
concatenating the $a$ and $b$ diagrams to get \begin{eqnarray*}
\xymatrix{\mathcal{H}_{1}\ar@{}[d]|{\bigoplus}\ar[r]^{s\circ p} & \mathcal{H}_{1}\ar@{}[d]|{\bigoplus}\\
\mathcal{H}_{2}\ar[ru]^{g}\ar[r]^{1} & \mathcal{H}_{2}}
 & \xymatrix{\ \ar@{}[d]|{{\displaystyle \textrm{and}}}\\
\ }
 & \xymatrix{\mathcal{H}_{1}\ar@{}[d]|{\bigoplus}\ar[r]^{1}\ar[rd]^{p\circ g} & \mathcal{H}_{1}\ar@{}[d]|{\bigoplus}\\
\mathcal{H}_{2}\ar[r]^{p\circ s} & \mathcal{H}_{2}}
\end{eqnarray*}
respectively, hence by the above identities $1+ab$ and $1+ba$ are
exactly the specified isomorphisms $\mathcal{H}\stackrel{\sim}{\to}\mathcal{H}_{1}$
and $\mathcal{H}\stackrel{\sim}{\to}\mathcal{H}_{2}$. Finally, let
$c$ act on $\mathcal{H}$ as the map \[
\xymatrix{\mathcal{H}_{1}\ar@{}[d]|{\bigoplus}\ar!<30pt,-15pt>*{}^{\sim} & \ \ar@{}[d]|{{\displaystyle \mathcal{H}}}\\
\mathcal{H}_{2}\ar!<30pt,15pt>*{}_{0} & \ }
\]
where the indicated isomorphism is the inverse of $\mathcal{H}\stackrel{\sim}{\to}\mathcal{H}_{1}$.
Then the composition $ac$ is the homomorphism \[
\xymatrix{\mathcal{H}_{1}\ar@{}[d]|{\bigoplus}\ar!<30pt,-15pt>*{}^{0} & \ \ar@{}[d]|{\displaystyle \mathcal{H}}\\
\mathcal{H}_{2}\ar!<30pt,15pt>*{}_{\sim} & \ }
\]
where the isomorphism is inverse to $\mathcal{H}\stackrel{\sim}{\to}\mathcal{H}_{2}$.
It is now easy to check that $(1+ab)c=1$, $(1+ba)c=0$, and $(1+ba)ac=1$.
Finally, we note that $c(1+ab)$ is the projection of $\mathcal{H}$
onto $\mathcal{H}_{1}\subset\mathcal{H}$ and likewise $ac(1+ba)$
is the projection onto $\mathcal{H}_{2}$, hence \[
1=c(1+ab)+ac(1+ba).\]
Therefore the action which we have constructed satisfies all of the
defining relations of $R$.
\end{proof}
Since $R$ is nonvanishing and we have a homomorphism $\mathcal{C}_{2}\to\mathcal{C}_{2}/\mathcal{I}\to R$,
we conclude that $\mathcal{C}_{2}$ (and hence the contact homology
of $K_{2}$) is nonvanishing as well.

\section{\label{sec:fd-reps}Finite-dimensional representations of $\mathcal{C}(K)$}

Although the Legendrian knot $K_{2}$ of Section \ref{sub:10_132-2}
is now known to have nontrivial contact homology and characteristic
algebra, one can ask for a simpler proof of this fact; in particular,
one can ask if $\mathcal{C}_{2}$ has any finite-dimensional representations.
The answer in this case is no.
\begin{lem}
\label{lem:no-rep-criterion}Suppose that an $\mathbb{F}$-algebra
$\mathcal{A}$ has a relation of the form $ab=1$. If the quotient
of $\mathcal{A}$ by the two-sided ideal $\langle ba-1\rangle$ is
trivial, i.e. if $0=1$ in $\mathcal{A}/\langle ba-1\rangle$, then
there is no representation $\mathcal{A}\to\mathrm{Mat}_{n}(\mathbb{F})$
for any $n$.\end{lem}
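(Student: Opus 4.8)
The plan is to argue by contradiction using linear algebra over $\mathbb{F}$. Suppose $\rho\colon\mathcal{A}\to\mathrm{Mat}_n(\mathbb{F})$ is a representation for some $n$, and write $A=\rho(a)$, $B=\rho(b)$. From the relation $ab=1$ in $\mathcal{A}$ we get $AB=I_n$. Since we are working with finite square matrices over a field, a one-sided inverse is automatically a two-sided inverse: $AB=I_n$ forces $A$ to be invertible with $A^{-1}=B$, hence also $BA=I_n$. The first key step is to record this elementary fact (a right inverse of a square matrix is a left inverse), which holds precisely because $\mathrm{Mat}_n(\mathbb{F})$ is finite-dimensional.

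The second step is to observe that $BA=I_n$ means $\rho(ba-1)=BA-I_n=0$, so $\rho$ kills the element $ba-1$ and therefore factors through the quotient $\mathcal{A}\to\mathcal{A}/\langle ba-1\rangle$. That is, $\rho$ descends to an algebra homomorphism $\bar\rho\colon\mathcal{A}/\langle ba-1\rangle\to\mathrm{Mat}_n(\mathbb{F})$. But by hypothesis $\mathcal{A}/\langle ba-1\rangle$ is the trivial algebra, in which $0=1$. An algebra homomorphism preserves the unit, so $\bar\rho(1)=I_n$, while $\bar\rho(0)=0$; since $0=1$ in the source, $I_n=0$ in $\mathrm{Mat}_n(\mathbb{F})$, which is impossible for any $n\geq 1$ (and the statement is vacuous or trivial for $n=0$ depending on convention, so there is nothing to check there). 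This contradiction completes the argument.

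I do not anticipate a genuine obstacle here: the lemma is really just the assembly of two standard observations — that one-sided inverses in $\mathrm{Mat}_n(\mathbb{F})$ are two-sided, and that a homomorphism out of the zero ring is impossible (into a nonzero ring). The only point requiring a sentence of care is making explicit \emph{why} $AB=I_n$ gives $BA=I_n$: this uses rank considerations (or determinants, or the pigeonhole principle on $\dim\ker$ vs.\ $\dim\mathrm{im}$) and genuinely relies on finite-dimensionality, which is exactly the hypothesis that makes the conclusion ``for any $n$'' meaningful and is what fails for infinite-dimensional representations. So the proof I would write is essentially two short paragraphs: one establishing $BA=I_n$, and one using the universal property of the quotient to derive the contradiction $I_n=0$.
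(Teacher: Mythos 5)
Your proof is correct and follows essentially the same route as the paper: deduce $BA=I_n$ from $AB=I_n$ using finite-dimensionality, conclude the representation kills $ba-1$ and factors through the trivial quotient, and derive the contradiction $I_n=0$. The only difference is that you spell out explicitly why a one-sided matrix inverse is two-sided, which the paper states without comment.
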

\begin{proof}
Suppose there is a homomorphism $\varphi:\mathcal{A}\to\mathrm{Mat}_{n}(\mathbb{F})$,
so in particular $\varphi(1)=1$. The equation $\varphi(ab-1)=0$
implies that $\varphi(a)$ and $\varphi(b)$ are inverse matrices,
so they commute and $\varphi(ba-1)=0$ as well. Then $\varphi$ factors
through the quotient $\mathcal{A}/\langle ba-1\rangle$ in which $0=1$,
hence $\varphi(1)=\varphi(0)=0$, which is a contradiction.
\end{proof}
Now in $\mathcal{C}_{2}$, we showed in Section \ref{sub:10_132-2}
that $x_{11}=x_{12}=0$ and $(1+x_{5}(x_{2}+x_{3}))x_{20}=1$. If
we impose the relation $x_{20}(1+x_{5}(x_{2}+x_{3}))=1$, then $x_{18}=x_{20}(\partial x_{22})=0$
as well and so $0=\partial x_{23}=1$, hence $\mathcal{C}_{2}$ has
no finite-dimensional representations by Lemma \ref{lem:no-rep-criterion}.

Lemma \ref{lem:no-rep-criterion} can also be used to prove that the
characteristic algebra of the Legendrian $m(10_{161})$ studied in
\cite{SV} has no finite-dimensional representations, by adding $x_{28}x_{13}=1$
to the relations $\partial x_{i}=0$ in \cite[Appendix A]{SV} and
showing that $0=1$ as a consequence, and similarly for the $m(10_{145})$
representative mentioned in the same article. Neither one of these
knots has maximal Thurston-Bennequin invariant.

On the other hand, it is interesting to ask when the characteristic
algebra $\mathcal{C}$ of a Legendrian knot $K$ has $n$-dimensional
representations. For $n=1$ the answer depends only on $tb$ and the
topological knot type:
\begin{prop}
\label{pro:kauffman-bound}There is a homomorphism $\mathcal{C}\to\mathrm{Mat}_{1}(\mathbb{F})\cong\mathbb{F}$
if and only if the Kauffman bound \[
tb(K)\leq\textrm{\emph{min-deg}}_{a}F_{K}(a,x)-1\]
(see \cite{FT-Kauffman}) is sharp.\end{prop}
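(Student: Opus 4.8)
The plan is to relate $1$-dimensional representations $\mathcal{C}\to\mathbb{F}$ directly to augmentations of the Chekanov--Eliashberg DGA, and then to invoke the known fact that augmentations exist precisely when the Kauffman bound is sharp. First I would observe that a homomorphism $\mathcal{C}(K)\to\mathbb{F}$ is the same thing as an algebra homomorphism $\varepsilon\colon Ch(K)\to\mathbb{F}$ that kills the two-sided ideal $\langle\mathrm{Im}(\partial)\rangle$. Since $\mathbb{F}$ is commutative, killing this ideal is equivalent to killing $\mathrm{Im}(\partial)$ as a set, i.e.\ to the condition $\varepsilon\circ\partial=0$; this is exactly the definition of an augmentation of $(Ch(K),\partial)$ (one should also check the grading condition, namely that $\varepsilon$ is supported in degree $0$ --- but any algebra map to $\mathbb{F}$ only sees the degree-$0$ part of the generators, and the standard convention is that $\varepsilon$ vanishes on generators of nonzero degree, which is automatic here since $\partial$ lowers degree by $1$ and one can restrict attention to the degree-$0$ generators). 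So $1$-dimensional representations of $\mathcal{C}(K)$ correspond bijectively to augmentations of $Ch(K)$.

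Next I would recall the theorem, due to work building on the correspondence between augmentations and rulings together with Fuchs--Tabachnikov's Kauffman-polynomial bound, that $Ch(K)$ admits an augmentation over $\mathbb{F}$ if and only if $K$ admits a graded normal ruling, and that the existence of a ruling is in turn equivalent to sharpness of the Kauffman bound $tb(K)\leq\mathrm{min\text{-}deg}_a F_K(a,x)-1$. Concretely, Rutherford showed that the ruling polynomial is a specialization of the Kauffman polynomial in such a way that $K$ has a graded ruling exactly when $tb(K)$ equals $\mathrm{min\text{-}deg}_a F_K - 1$, and Fuchs and independently Sabloff established the equivalence between graded rulings and augmentations. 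Stringing these equivalences together gives: a homomorphism $\mathcal{C}(K)\to\mathbb{F}$ exists $\iff$ an augmentation of $Ch(K)$ exists $\iff$ $K$ has a graded normal ruling $\iff$ the Kauffman bound is sharp.

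The one genuinely delicate point --- and the step I expect to be the main obstacle to present cleanly --- is the first equivalence, that a homomorphism out of the \emph{characteristic algebra} $\mathcal{C}(K)$ is literally the same as an augmentation. The subtlety is the two-sided-ideal issue: $\langle\mathrm{Im}(\partial)\rangle$ is the ideal generated by $\mathrm{Im}(\partial)$, which a priori contains elements like $u(\partial w)v$, and one must confirm that an algebra map $Ch(K)\to\mathbb{F}$ killing $\partial w$ for all $w$ automatically kills all such products. This is immediate because $\mathbb{F}$ is a commutative ring, so $\varepsilon(u(\partial w)v)=\varepsilon(u)\varepsilon(v)\varepsilon(\partial w)=0$; hence $\varepsilon$ descends to $\mathcal{C}(K)$ iff $\varepsilon\circ\partial=0$. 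I would state this carefully, note the grading convention, and then the rest of the proof is just citing the chain of known equivalences (the augmentation--ruling correspondence and \cite{FT-Kauffman} via Rutherford's specialization result), with no further computation required.
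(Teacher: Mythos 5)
Your first step is correct and is exactly the paper's: since $\mathbb{F}$ is commutative, an algebra map $Ch(K)\to\mathbb{F}$ kills the two-sided ideal $\langle\mathrm{Im}(\partial)\rangle$ precisely when it kills $\mathrm{Im}(\partial)$, so homomorphisms $\mathcal{C}(K)\to\mathbb{F}$ correspond bijectively to algebra maps $\epsilon$ with $\epsilon\circ\partial=0$, i.e.\ to \emph{ungraded} augmentations. But the middle of your argument has a genuine error: the grading discussion and the appeal to \emph{graded} normal rulings. First, your claim that the grading condition on an augmentation is ``automatic'' is false --- an arbitrary algebra map $Ch(K)\to\mathbb{F}$ is under no obligation to vanish on generators of nonzero degree, and nothing about $\partial$ lowering degree forces this. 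What the correspondence with $\mathcal{C}(K)$ actually produces is an ungraded augmentation, full stop; if you insisted on the graded condition, the bijection with homomorphisms out of $\mathcal{C}(K)$ would break.

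Second, and more seriously, the ruling-theoretic input is misquoted. Rutherford's theorem ties the Kauffman polynomial bound to \emph{ungraded} (1-graded) normal rulings: the Kauffman bound is sharp if and only if the front admits an ungraded normal ruling. Graded rulings and graded augmentations belong to a different statement (graded rulings force $r=0$, for instance, so ``graded ruling exists $\iff$ Kauffman bound sharp'' is not true for Legendrian knots with nonzero rotation number and sharp Kauffman bound), and the 2-graded ruling polynomial is the one governed by the HOMFLY-type bound rather than the Kauffman one. The correct chain, which is the paper's proof, is: homomorphism $\mathcal{C}\to\mathbb{F}$ $\iff$ ungraded augmentation of $Ch(K)$ $\iff$ ungraded normal ruling (Fuchs, Fuchs--Ishkhanov, Sabloff) $\iff$ sharpness of the Kauffman bound (Rutherford). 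So your architecture is right, but you must replace ``graded'' by ``ungraded'' throughout and delete the claim that the grading condition comes for free; as written, the two middle equivalences you cite are not the theorems in the literature.
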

\begin{proof}
The Kauffman bound for $K$ is achieved if and only if a front diagram
for $K$ admits an ungraded normal ruling \cite{Rutherford}, which
happens if and only if $Ch(K)$ admits an ungraded augmentation \cite{Fuchs,Fuchs-Ishkhanov,Sabloff}.
An augmentation is an algebra homomorphism $Ch(K)\stackrel{\epsilon}{\to}\mathbb{F}$
which satisfies $\epsilon\circ\partial=0$, and these correspond bijectively
to algebra homomorphisms $\mathcal{C}\to\mathbb{F}$, so the latter
exists if and only if the Kauffman bound is sharp.
\end{proof}
In particular, the Kauffman bound is known to be sharp for all knots
with at most 9 crossings except for $m(8_{19})$ and $m(9_{42})$
(see \cite{Ng - 2-bridge}); for all 10-crossing knots except $m(10_{124})$,
$m(10_{128})$, $m(10_{132})$, and $m(10_{136})$ \cite{KnotInfo};
and for all alternating knots \cite{Rutherford}. Thus the characteristic
algebra of a Legendrian representative of one of these knot types
has a 1-dimensional representation if and only if it is $tb$-maximizing.

We will now demonstrate the existence of infinitely many Legendrian
knots whose characteristic algebras have $n$-dimensional representations
for $n=2$ but not for $n=1$. For convenience, we will use the following
presentation of $\mathrm{Mat}_{2}(\mathbb{F})$.
\begin{lem}
The ring $\mathrm{Mat}_{2}(\mathbb{F})$ has a presentation of the
form \[
\frac{\mathbb{F}\langle a,b\rangle}{\langle a^{2}=b^{2}=0,ab+ba=1\rangle}.\]
\end{lem}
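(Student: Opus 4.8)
The plan is to exhibit an explicit pair of $2\times 2$ matrices $A,B$ over $\mathbb{F}=\mathbb{Z}/2\mathbb{Z}$ satisfying $A^2=B^2=0$ and $AB+BA=I$, and then to verify that they generate all of $\mathrm{Mat}_2(\mathbb{F})$; this gives a surjection from the quotient algebra $\mathbb{F}\langle a,b\rangle/\langle a^2,b^2,ab+ba-1\rangle$ onto $\mathrm{Mat}_2(\mathbb{F})$, and a dimension count forces it to be an isomorphism. The natural choice is $A=\begin{pmatrix}0&1\\0&0\end{pmatrix}$ and $B=\begin{pmatrix}0&0\\1&0\end{pmatrix}$, for which $A^2=B^2=0$, $AB=\begin{pmatrix}1&0\\0&0\end{pmatrix}$, $BA=\begin{pmatrix}0&0\\0&1\end{pmatrix}$, so $AB+BA=I$ as required.

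First I would record that the universal property of the free algebra gives a unique $\mathbb{F}$-algebra homomorphism $\Phi\colon\mathbb{F}\langle a,b\rangle\to\mathrm{Mat}_2(\mathbb{F})$ with $\Phi(a)=A$, $\Phi(b)=B$. The relation computations above show that the two-sided ideal $\langle a^2,b^2,ab+ba-1\rangle$ lies in $\ker\Phi$, so $\Phi$ descends to a homomorphism $\overline{\Phi}$ from the quotient algebra $Q:=\mathbb{F}\langle a,b\rangle/\langle a^2,b^2,ab+ba-1\rangle$ to $\mathrm{Mat}_2(\mathbb{F})$. Next I would check surjectivity of $\overline{\Phi}$: the four matrices $AB$, $BA$, $A$, $B$ are exactly the four matrix units $E_{11},E_{22},E_{12},E_{21}$, which span $\mathrm{Mat}_2(\mathbb{F})$, so $\overline{\Phi}$ hits all of $\mathrm{Mat}_2(\mathbb{F})$.

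The remaining point is to show $\dim_{\mathbb{F}} Q\le 4$, since then the surjection $\overline{\Phi}$ onto the $4$-dimensional algebra $\mathrm{Mat}_2(\mathbb{F})$ is forced to be an isomorphism. For this I would argue that $Q$ is spanned by the images of $1$, $a$, $b$, and $ab$ (equivalently $ba=1+ab$). Any word in $a,b$ can be shortened using the relations: occurrences of $aa$ or $bb$ kill the word, and the relation $ba=1+ab$ lets one move every $b$ that is immediately followed by an $a$ past it, at the cost of producing a shorter word plus a word with one fewer ``descent''; iterating, every monomial reduces to a word of the form $a^{\varepsilon}(ba)^k$ or similar, and using $ba=1+ab$ together with $a^2=b^2=0$ one brings everything into the list $\{1,a,b,ab\}$. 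I expect this normal-form/spanning argument — making precise that no monomial of length $\ge 2$ other than (a representative of) $ab$ survives — to be the only place requiring care; it is a short combinatorial induction on word length, but it must be stated cleanly rather than waved at. Once $\dim Q\le 4$ is in hand, comparing dimensions completes the proof that $Q\cong\mathrm{Mat}_2(\mathbb{F})$.
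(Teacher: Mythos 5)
Your proposal is correct and follows essentially the same route as the paper: map $a,b$ to the matrix units $E_{12},E_{21}$, verify the relations, get surjectivity because $A,B,AB,BA$ span $\mathrm{Mat}_2(\mathbb{F})$, and then show the quotient is spanned by $1,a,b,ab$ (using $ba=1+ab$ and $a^2=b^2=0$ to reduce every monomial) so that a dimension count gives injectivity. The only difference is cosmetic: you phrase the last step as $\dim_{\mathbb{F}}Q\le 4$ while the paper compares cardinalities ($|R|\le 16=|\mathrm{Mat}_2(\mathbb{F})|$), and your stated intention to spell out the monomial reduction by induction on word length is exactly the short verification the paper leaves implicit.
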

\begin{proof}
Let $R$ be the $\mathbb{F}$-algebra with the given presentation,
and consider a map $\varphi:R\to\mathrm{Mat}_{2}(\mathbb{F})$ of
the form \begin{eqnarray*}
a & \mapsto & A=\left(\begin{array}{cc}
0 & 1\\
0 & 0\end{array}\right)\\
b & \mapsto & B=\left(\begin{array}{cc}
0 & 0\\
1 & 0\end{array}\right).\end{eqnarray*}
It is easy to check that $A^{2}=B^{2}=0$ and $AB+BA=I$, so $\varphi$
is a valid homomorphism, and since $A,B,AB,BA$ form an additive basis
of $\mathrm{Mat}_{2}(\mathbb{F})$ it is surjective. To check that
$\varphi$ is also injective, we note that any nonzero monomial in
$R$ is equal to one of $1,a,b,ab,$ or $ba=1+ab$, and so $1,a,b,ab$
span $R$ as an $\mathbb{F}$-vector space; since the image of $\varphi$
has order $|\mathrm{Mat}_{2}(\mathbb{F})|=16\geq|R|$ it follows that
$\varphi$ is injective.
\end{proof}
Let $T_{p,-q}$ be the Legendrian representative of the $(p,-q)$-torus
knot as in Figure \ref{fig:torus-knot}, where $q>p\geq3$; there
are $p$ numbered left cusps at the leftmost edge of the diagram,
$q-p$ left cusps in the innermost region of the diagram, and $q$
right cusps. The algebra $Ch(T_{p,-q})$ can be computed following
\cite{Ng - computable}: the front projection is simple, so $Ch(T_{p,-q})$
is generated by crossings and right cusps and the differential counts
admissible embedded disks in the diagram.

\begin{figure}
\begin{centering}
\includegraphics{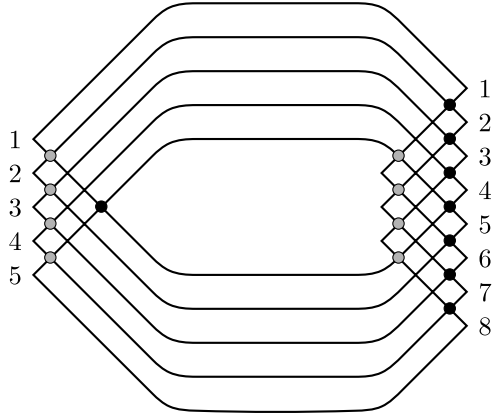}
\par\end{centering}

\caption{A Legendrian representative $T_{5,-8}$ of the $(5,-8)$-torus knot.\label{fig:torus-knot}}

\end{figure}
We label the generators of $Ch(T_{p,-q})$ as follows. On the left
half of the diagram, $x_{ij}$ is the intersection of the strands
through the numbered left cusps $i$ and $j$ for $1\leq i<j\leq p$.
On the right half, $y_{ij}$ denotes the intersection of strands through
the numbered right cusps $i$ and $j$ for $1\leq i<j\leq\max(q,i+p-1)$,
and $z_{i}$ is the $i$th right cusp. 

We define an algebra homomorphism $f:Ch(T_{p,-q})\to\mathrm{Mat}_{2}(\mathbb{F})$
by sending all generators to $0$ except \begin{eqnarray*}
x_{i,i+1},y_{j,j+p-1} & \mapsto & a\\
x_{1,p},y_{j,j+1} & \mapsto & b.\end{eqnarray*}
In Figure \ref{fig:torus-knot}, $f$ is equal to $a$ on the crossings
marked with gray dots, $b$ on the crossings marked with black dots,
and $0$ on all other crossings and right cusps. If we can show that
$f(\partial v)=0$ for all generators $v$, then $f$ is a morphism
of DGAs (where $\mathrm{Mat}_{2}(\mathbb{F})$ has trivial differential)
and it induces a representation $\mathcal{C}(T_{p,-q})\to\mathrm{Mat}_{2}(\mathbb{F})$.
\begin{prop}
The homomorphism $f:Ch(T_{p,-q})\to\mathrm{Mat}_{2}(\mathbb{F})$
satisfies $f(\partial v)=0$ for all $v$.\end{prop}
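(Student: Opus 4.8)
The plan is to read off the differential of $Ch(T_{p,-q})$ from the disk count for this simple front (following \cite{Ng - computable}) and then to verify $f(\partial v)=0$ one generator at a time, exploiting the fact that $f$ kills all but a handful of generators and that $a^2=b^2=0$ in $\mathrm{Mat}_2(\mathbb F)$. The structural facts I would record first are: on the left half, $\partial x_{ij}=\sum_{i<k<j}x_{ik}x_{kj}$ (the usual half-twist differential, with no constant term); on the right half, $\partial y_{ij}=\sum_{i<k<j}y_{ik}y_{kj}$ together with a constant term $1$ for the ``wrap-around'' crossings $y_{i,i+p}$ with $1\le i\le q-p$ and with terms involving the right cusps; and $\partial z_i$ is a sum of words in the $y$'s and $z$'s, possibly carrying a constant term. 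The role of $\mathrm{Mat}_2(\mathbb F)$ is then packaged into one observation: a monomial $f(w)$, for $w$ a word in the generators, is nonzero only when every letter of $w$ lies in $\{x_{i,i+1},\,x_{1,p},\,y_{j,j+1},\,y_{j,j+p-1}\}$ (the gray/black dot generators of Figure~\ref{fig:torus-knot}) and the resulting word in $a,b$ contains no $aa$ or $bb$, and such surviving monomials are exactly $a$, $b$, $ab$, $ba$, $aba=a$, $bab=b$.

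With that in hand the check proceeds by generator type. For a left crossing, $f(\partial x_{ij})=\sum_{i<k<j}f(x_{ik})f(x_{kj})$, and a short look at the index constraints shows that both factors can be nonzero only when $k=i+1$ and $j=i+2$, which contributes $a\cdot a=0$; hence $f(\partial x_{ij})=0$. The right crossings that are not wrap-arounds are handled identically. The crux is the family $y_{i,i+p}$: here $\partial y_{i,i+p}$ carries the constant $1$, among the triangles $y_{ik}y_{k,i+p}$ exactly two have nonzero image, namely $y_{i,i+1}y_{i+1,i+p}\mapsto ba$ and $y_{i,i+p-1}y_{i+p-1,i+p}\mapsto ab$ (any other $k$, and any longer disk word, yields a repeated $a$ or $b$ or a non-special letter and so maps to $0$), while the remaining terms involve only right cusps. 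Thus $f(\partial y_{i,i+p})=1+ba+ab=0$ by the defining relation $ab+ba=1$ --- this single cancellation is precisely why the target must be $\mathrm{Mat}_2(\mathbb F)$ and not $\mathbb F$. The right cusps $z_i$ are then treated by the same annihilation principle: every $f$-nonzero term of $\partial z_i$ is either a single special generator or an alternating product of two crossings adjacent to the cusp, and these cancel in pairs, with any constant term matched by a pair mapping to $ab$ and $ba$.

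The step I expect to be the real work is nailing down, for general $p$ and $q$, the precise list of disks on the right half and at the cusps --- in particular: confirming from Figure~\ref{fig:torus-knot} that the constant $1$ occurs in $\partial y_{i,i+p}$ and that no \emph{uncancelled} constant occurs elsewhere; correctly incorporating the band condition $j\le\max(q,i+p-1)$ so that the wrap-around crossings and their two surviving triangles exist exactly as described; and ruling out any disk that contributes a length-$\ge 3$ word in the special generators such as $aba$, which is \emph{nonzero} in $\mathrm{Mat}_2(\mathbb F)$ and would not be killed by the relations $a^2=b^2=0$. Once these combinatorial inputs are fixed, the proposition follows from the elementary index bookkeeping sketched above.
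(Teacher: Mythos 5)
Your algebraic engine is the same one the paper uses --- same-colored pairs of corners die because $a^{2}=b^{2}=0$, and the single essential cancellation is $1+ab+ba=0$ --- but the combinatorial input on which everything rests is precisely the part you defer to ``the real work,'' and the structure you guess for the differential conflicts with the diagram. You place the constant term $1$, and hence the $ab+ba$ cancellation, at the wrap-around crossings $y_{i,i+p}$, via the two triangles $y_{i,i+1}y_{i+1,i+p}\mapsto ba$ and $y_{i,i+p-1}y_{i+p-1,i+p}\mapsto ab$. In the actual front the cancellation happens at the right cusps $z_{i}$, whose differentials carry the constant $1$ coming from the small disk at the cusp: exactly two mixed-color disks have their initial vertex there, giving $f(\partial z_{i})=1+ab+ba=0$. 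For the crossings, by contrast, there is \emph{no} mixed-color contribution at all: the only black generator that can appear as a corner of a nontrivial disk based at a crossing is $x_{1,p}$, and any such disk must run over the first or $p$-th numbered left cusp and hence through $z_{1}$ or $z_{q}$, which lies to the \emph{right} of the crossing, so the disk cannot occur in $\partial v$. Your ansatz $\partial y_{ij}=\sum_{k}y_{ik}y_{kj}+(\text{constant for wrap-arounds})$ is therefore not the differential of this front; note that if either the constant or the pair of triangles were absent from $\partial y_{i,i+p}$ (as the paper's disk count indicates), your own computation would yield $f(\partial y_{i,i+p})=ab+ba=1\neq0$ and the argument would collapse. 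Since you have not verified which of these disks exist, the proposal is open exactly where the content of the proposition lies.

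A second, smaller issue is your treatment of words of length $\geq 3$ in the special generators: you correctly observe that $aba=a$ and $bab=b$ are nonzero, so such terms are not killed by $a^{2}=b^{2}=0$ and must be excluded geometrically, but you again leave this to be checked. The paper disposes of it in one line --- every disk all of whose corners avoid $\ker(f)$ has exactly two corners --- and this observation, together with the two local counts above, is the entire proof. The lesson is that the intended argument never writes down the full differential at all; it isolates the three disk-theoretic facts that $f$ actually sees. If you want to salvage your more computational route, you would need to derive the precise differential of this specific front (including where the constant terms sit and the index conventions for $j>q$), which is considerably more work than the proposition requires.
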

\begin{proof}
Call an admissible disk \emph{nontrivial} if none of its corners are
in $\ker(f)$. Then it is easy to see that any nontrivial disk has
exactly two corners, and if both corners have the same color (in the
sense of Figure \ref{fig:torus-knot}, i.e. if they are sent to the
same element of $\mathrm{Mat}_{2}(\mathbb{F})$) then the contribution
of this disk to $f(\partial v)$ is either $a^{2}=0$ or $b^{2}=0$.
Thus we can determine $f(\partial v)$ by only counting disks with
initial vertex at $v$ and having exactly one gray corner and one
black corner.

If $v$ is the right cusp $z_{i}$, then there are two nontrivial
disks contributing $ab$ and $ba$ to the differential, so $f(\partial z_{i})=1+ab+ba=0$.
For all crossings $v$, however, the only possible black corner for
a nontrivial disk is $x_{1,p}$. Such a disk must include either the
first or the $p$th numbered left cusp on its boundary depending on
whether the interior of the disk is immediately above or below $x_{1,p}$,
but then the boundary of the disk must pass through either $z_{1}$
or $z_{q}$, which in particular is to the right of $v$, and so it
cannot contribute to $f(\partial v)$. We conclude that $f(\partial v)=0$
for all generators $v$ of $Ch(T_{p,-q})$, as desired.
\end{proof}
We can compute $tb(T_{p,-q})=-pq$ for all $p$ and $q$, hence $T_{p,-q}$
is $tb$-maximizing by the classification of Legendrian torus knots
\cite{Torus knots}, but for odd $p$ the Kauffman bound is $tb(K)\leq-pq+q-p$
\cite{Epstein-Fuchs}. Using Proposition \ref{pro:kauffman-bound},
we conclude:
\begin{cor}
Let $p\geq3$ be odd and $q>p$. Then the characteristic algebra $\mathcal{C}(T_{p,-q})$
admits an $n$-dimensional representation for $n=2$ but not for $n=1$.\end{cor}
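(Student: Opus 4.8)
The plan is to read the corollary off from the two propositions just established, together with the quoted values of $tb$ and of the Kauffman bound for negative torus knots; essentially no new argument is required, so the task is only to check that the inputs match up. First I would handle the $n=2$ statement. By the preceding proposition, $f(\partial v)=0$ for every generator $v$ of $Ch(T_{p,-q})$, so $f$ is a chain map from $(Ch(T_{p,-q}),\partial)$ to $\mathrm{Mat}_2(\mathbb{F})$ equipped with the zero differential; in particular $f$ annihilates the two-sided ideal $\langle\mathrm{Im}(\partial)\rangle$ and therefore descends to a unital algebra homomorphism $\mathcal{C}(T_{p,-q})\to\mathrm{Mat}_2(\mathbb{F})$. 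This is by definition a $2$-dimensional representation (and it is visibly nonzero, e.g. $x_{1,2}\mapsto a\neq 0$), so $\mathcal{C}(T_{p,-q})$ admits an $n$-dimensional representation for $n=2$.

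Next I would rule out $n=1$. By Proposition \ref{pro:kauffman-bound} applied to $K=T_{p,-q}$, an algebra homomorphism $\mathcal{C}(T_{p,-q})\to\mathbb{F}$ exists if and only if the Kauffman bound for $T_{p,-q}$ is sharp, i.e. if and only if $tb(T_{p,-q})$ equals $\textrm{min-deg}_a F_{T_{p,-q}}(a,x)-1$. Now $tb(T_{p,-q})=-pq$, whereas the computation of Epstein--Fuchs \cite{Epstein-Fuchs} shows that for odd $p$ the right-hand side equals $-pq+q-p$. Since $q>p\ge 3$ forces $q-p\ge 1$, we get $-pq<-pq+q-p$, so the Kauffman bound is strict rather than sharp; hence $\mathcal{C}(T_{p,-q})$ has no $1$-dimensional representation, and the two halves of the corollary are proved.

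The main thing to be careful about --- and it is a bookkeeping point rather than a genuine obstacle --- is that the two imported facts are stated in compatible normalizations: the classification of Legendrian torus knots \cite{Torus knots} is what legitimizes using the front of Figure \ref{fig:torus-knot} as \emph{the} $tb$-maximizing representative with $tb=-pq$, and the Epstein--Fuchs minimal-degree formula must be matched against the Kauffman bound exactly as it appears in Proposition \ref{pro:kauffman-bound}. With those matched, the gap is exactly $q-p\ge 1$, and the corollary follows at once; all the real work was already done in verifying $f(\partial v)=0$ and in Proposition \ref{pro:kauffman-bound}.
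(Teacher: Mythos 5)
Your proposal is correct and follows the paper's own route: the $n=2$ case descends the DGA morphism $f$ to $\mathcal{C}(T_{p,-q})$, and the $n=1$ case compares $tb(T_{p,-q})=-pq$ with the Epstein--Fuchs value $-pq+q-p$ of the Kauffman bound for odd $p$ and invokes Proposition \ref{pro:kauffman-bound}. (Minor remark: the classification of Legendrian torus knots is not actually needed for the corollary itself, since $tb=-pq$ is read off the front directly; it only serves to note that $T_{p,-q}$ is $tb$-maximizing.)
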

\begin{rem}
The knots $T_{3,-4}$ and $T_{3,-5}$ are the unique $tb$-maximizing
representatives of $m(8_{19})$ and $m(10_{124})$ up to change of
orientation \cite{Torus knots}, so if any $tb$-maximizing Legendrian
representative of a knot with at most 10 crossings has vanishing contact
homology or characteristic algebra (such as the $m(10_{132})$ of
Section \ref{sub:10_132-1}) then it must represent one of $m(9_{42})$,
$m(10_{128})$, $m(10_{132})$, or $m(10_{136})$. The characteristic
algebra of the known $tb$-maximizing Legendrian $m(9_{42})$, which
has a plat diagram with braid word \[
2,1,1,4,5,3,5,3,2,4,3,3,2,4,\]
can also be shown to have a $2$-dimensional representation, so it
does not vanish.
\end{rem}
It is not known whether there are Legendrian knots whose characteristic
algebras have representations of minimal dimension $n\geq3$, or whether
this minimal dimension can be used to distingush any Legendrian knots
with nontrivial characteristic algebras and the same classical invariants.
We leave open the question of which Legendrian knots $K$ admit representations
$\mathcal{C}(K)\to\mathrm{Mat}_{n}(\mathbb{F})$ for fixed $n\geq2$
or even for any finite $n$.

\appendix

\section{\label{sec:differential-10_132-1}The differential of the vanishing
$m(10_{132})$}

Let $K_{1}$ be the representative of $m(10_{132})$ with braid word
\[
6,7,4,3,7,5,3,6,4,2,5,1,3,2,5,2,4,6,2.\]
Then $Ch(K_{1})$ has generators $x_{1},\dots,x_{23}$ over $\mathbb{Z}[t,t^{-1}]$
with the following nonzero differentials \cite{LegInv.nb}:\begin{eqnarray*}
\partial x_{2} & = & -x_{1}\\
\partial x_{4} & = & x_{3}\\
\partial x_{6} & = & x_{3}x_{1}\\
\partial x_{8} & = & x_{3}+x_{3}x_{2}x_{5}-x_{6}x_{5}\\
\partial x_{9} & = & x_{1}+x_{7}x_{4}x_{1}-x_{7}x_{6}\\
\partial x_{11} & = & 1+x_{2}x_{5}+x_{7}x_{4}+x_{7}x_{4}x_{2}x_{5}-x_{7}x_{8}+x_{9}x_{5}\\
\partial x_{12} & = & x_{10}\\
\partial x_{13} & = & x_{10}x_{4}x_{1}-x_{10}x_{6}\\
\partial x_{14} & = & -x_{12}x_{4}x_{1}+x_{12}x_{6}+x_{13}\\
\partial x_{17} & = & x_{10}x_{4}x_{15}+x_{10}x_{4}x_{2}x_{5}x_{15}-x_{10}x_{8}x_{15}+x_{13}x_{5}x_{15}\\
\partial x_{18} & = & -x_{15}x_{7}\\
\partial x_{20} & = & 1-x_{4}x_{1}+x_{6}-x_{4}x_{1}x_{16}x_{19}+x_{6}x_{16}x_{19}\\
\partial x_{21} & = & 1-x_{12}x_{4}x_{15}-x_{12}x_{4}x_{2}x_{5}x_{15}+x_{12}x_{8}x_{15}-x_{14}x_{5}x_{15}+x_{17}\\
 &  & -x_{19}x_{5}x_{15}-x_{19}x_{16}x_{12}x_{4}x_{15}-x_{19}x_{16}x_{12}x_{4}x_{2}x_{5}x_{15}\\
 &  & +x_{19}x_{16}x_{12}x_{8}x_{15}-x_{19}x_{16}x_{14}x_{5}x_{15}+x_{19}x_{16}x_{17}\\
\partial x_{22} & = & 1-x_{10}+x_{17}x_{7}+x_{10}x_{4}x_{18}+x_{10}x_{4}x_{2}x_{5}x_{18}-x_{10}x_{8}x_{18}+x_{13}x_{5}x_{18}\\
\partial x_{23} & = & t^{-1}+x_{15}x_{2}+x_{15}x_{7}x_{4}x_{2}+x_{15}x_{9}-x_{18}x_{3}x_{2}+x_{18}x_{6}.\end{eqnarray*}

\section{\label{sec:differential-10_132-2}The differential of the nonvanishing
$m(10_{132})$}

Let $K_{2}$ be the representative of $m(10_{132})$ with braid word
\[
4,5,3,5,3,2,4,1,3,2,4,2,5,1,3,2,4,4,3,5,4,2.\]
Then $Ch(K_{2})$ has generators $x_{1},\dots,x_{25}$ over $\mathbb{Z}/2\mathbb{Z}$
with the following nonzero differentials \cite{LegInv.nb}:

\begin{eqnarray*}
\partial x_{2}=\partial x_{3} & = & x_{1}\\
\partial x_{7} & = & x_{4}+x_{5}(1+(x_{2}+x_{3})x_{4})\\
\partial x_{8} & = & x_{6}\\
\partial x_{9} & = & x_{6}(1+(x_{2}+x_{3})x_{4})\\
\partial x_{10} & = & x_{9}+x_{8}(1+(x_{2}+x_{3})x_{4})\\
\partial x_{13} & = & x_{6}(x_{2}+x_{3})+x_{11}(1+x_{5}(x_{2}+x_{3}))\\
\partial x_{14} & = & (1+(x_{2}+x_{3})x_{4})x_{12}\\
\partial x_{15} & = & x_{12}x_{11}\\
\partial x_{16} & = & x_{14}x_{11}+(1+(x_{2}+x_{3})x_{4})x_{15}\\
\partial x_{17} & = & x_{12}(x_{13}+x_{8}(x_{2}+x_{3}))+x_{15}(1+x_{5}(x_{2}+x_{3}))\\
\partial x_{19} & = & (1+(x_{2}+x_{3})x_{4})+cx_{18}\\
\partial x_{20} & = & x_{18}x_{12}\\
\partial x_{21} & = & x_{14}+x_{19}x_{12}+cx_{20}\\
\partial x_{22} & = & (1+x_{5}(x_{2}+x_{3}))x_{18}\\
\partial x_{23} & = & 1+x_{11}x_{22}+(x_{13}+x_{8}(x_{2}+x_{3}))x_{18}\\
\partial x_{24} & = & 1+x_{22}x_{12}+(1+x_{5}(x_{2}+x_{3}))x_{20}\\
\partial x_{25} & = & 1+c\end{eqnarray*}
where \[
c=x_{2}+x_{3}+(1+(x_{2}+x_{3})x_{4})x_{17}+x_{14}(x_{13}+x_{8}(x_{2}+x_{3}))+x_{16}(1+x_{5}(x_{2}+x_{3})).\]

\end{document}